\newtheorem{theo}{Theorem}[section]
\newtheorem*{theo*}{Theorem}
\newtheorem{defi}[theo]{Definition}
\newcommand{\N}{\mathbb{N}}
\title{Two identities involving Cohen-Ramanujan expansions}
\begin{document}
\keywords{Ramanujan Sum; convolution sums; Ramanujan expansions; Cohen-Ramanujan Sum; Cohen-Ramanujan Expansions; asymptotic formulae; Jordan totient function; Klee's function; number of divisors function}
\subjclass[2020]{11A25, 11L03, 11N37}
\author[A Chandran]{Arya Chandran}
\address{Department of Mathematics, CVV Institute of Science and Technology, Chinmaya Vishwa Vidyapeeth (Deemed to be University),  Anthiyal-Onakkoor Road, Ernakulam, Kerala-686667, India}
\email{aryavinayachandran@gmail.com}
\author[K V Namboothiri]{K Vishnu Namboothiri}
\address{Department of Mathematics, 
Baby John Memorial Government College, Chavara, Sankaramangalam, Kollam – 691583,
Kerala, India}
\address{Department of Collegiate Education, Government of Kerala, India}
\email{kvnamboothiri@gmail.com}

\begin{abstract}
An arithmetical function $f$ is said to admit a  \emph{Cohen-Ramanujan expansion} $f(n) := \sum\limits_{r}\widehat{f}(r)c_r^s(n)$, if the series on the right hand side converges for suitable complex numbers $\widehat{f}(r)$. Here $c_r^s(n)$ denotes the Cohen-Ramanujan sum defined by E. Cohen. We deduce here a Cohen-Ramanujan expansion for the Jordan totient function $J_k(n)$. Further, we give an an asymptotic formula for the sum $\sum\limits_{n \leq N} \frac{J_a(n)}{n^a} \frac{J_b(n+h)}{(n+h)^b}$ using the expansion we derive.
\end{abstract}

 \maketitle
\section{Introduction}
For some well-known arithmetical functions, Srinivasa Ramanujan introduced infinite Fourier series like expansions in the form $\sum\limits_{r}a_rc_r(n)$ in \cite{ramanujan1918certain}. In this expression, $c_r(n)$  is the \emph{Ramanujan sum} defined by
 \begin{align*}
c_r(n):=\sum\limits_{\substack{{m=1}\\(m,r)=1}}^{r}e^{\frac{2 \pi imn}{r}}.
 \end{align*}
 When such an infinite series representation exists for an arithmetical function $f$ (mostly only pointwise convergent), $f$ is said to possess a \emph{Ramanujan-Fourier series expansion} or simply a \emph{Ramanujan expansion}. Such expansions and several conditions for the existence of such expansions were provided by many authors in papers like \cite{hardy1921note},  \cite{lucht2010survey} and \cite{murty2013ramanujan}.

E. Cohen generalized the Ramanujan sum in  \cite{cohen1949extension} defining the \emph{Cohen-Ramanujan sum} by
\begin{align}\label{gen-ram-sum}
c_r^s(n) :=\sum\limits_{\substack{h=1\\{(h,r^s)_s=1}}}^{r^s}e^{\frac{2\pi i n h}{r^s}}.
\end{align}
 In Cohen-Ramanujan sum, the symbol $(m,n)_{s}$ stands for the \emph{generalized GCD} of $m$ and $n$ which is defined to be equal to the largest positive integer $l^s$ such that $l^s|m$ and $l^s|n$ with $l\in\N$.  When $s=1$,   this sum reduces to the usual Ramanujan sum.

  Analogous to the Ramanujan expansions, an arithmetical function $f$ is said to admit a \emph{Cohen-Ramanujan expansion} in the form
 \begin{align*}
 f(n) := \sum\limits_{r}\widehat{f}(r)c_r^s(n^s),
 \end{align*}
if the series on the right hand side converges for suitable complex numbers $\widehat{f}(r)$. These authors derived some conditions for the existence of such expansions   in \cite{chandran2023ramanujan}. After this, some asymptotic formulae involving Cohen-Ramanujan expansions were also derived by these authors in \cite{chandran2025asymptotic}.

 E. Cohen  himself gave another generalization of the Ramanujan sum in \cite{cohen1959trigonometric} using $k$-vectors. For a positive integer $k$, a \emph{$k$-vector} is  an ordered set $\{x_i\}=\{x_1,\ldots,x_k\}$ of $k$ integers. Two $k$-vectors $\{x_i\}$ and $\{y_i\}$ are said to be congruent (mod $k,r$) if  $x_i\equiv y_i$ (mod $r$), $i=1,\ldots,k$. For $k\geq 1$, Cohen defined
\begin{align}
c^k(n,r) &= \sum\limits_{(\{x_i\},r)=1}e^{\frac{2\pi i n (x_1+x_2+\ldots x_k)}{r}},\label{cohen-sum2}
\end{align} where $\{x_i\}$ ranges over a residue system (mod $k,r$). For positive integers $k$ and $n$, the \emph{Jordan totient function} $J_k(n)$  \cite[{Section V.3}]{sivaramakrishnan1988classical} is defined  to be the number of ordered sets of $k$ elements from a complete residue system (mod $n$) such that the greatest common divisor of each set is prime to $n$.
A product formula for $J_k$ ( \cite[{Section V.3}]{sivaramakrishnan1988classical}) similar to the one for the Euler totient function $\varphi$ is
  \begin{align}
  J_k(n)=n^k \prod\limits_{\substack{p\mid n\\p\text{ prime}}}\left(1-\frac{1}{p^k}\right)\label{eq:j_k_prod_formula}.
  \end{align}
From this formula, it follows that $J_k(n)$ is multiplicative in $n$ and $ J_1(n)=\varphi(n)$.
 Two positive integers $a,b$ are said to be relatively $k-$prime if $(a,b)_k = 1$. E. Cohen introduced the \emph{Cohen totient function} $\varphi_k(r)$  \cite[{Section V.5}]{sivaramakrishnan1988classical} to denote the number of integers $a$ with $1\leq a\leq r^k$ that are relatively $k-$prime to $r^k$. Hence  $\varphi_1(n) =\varphi(n)$.  Further, $\varphi_k(r) = J_k(r)$ though they are defined in different ways (see the remark after  \cite[{Corollary 5.3}]{sivaramakrishnan1988classical}).

S. Ramanujan  \cite{ramanujan1918certain} derived the infinite series expansion

\begin{align}
\frac{J_k(n)}{n^k}\zeta(k+1) = \sum\limits_{\substack{r=1}}^{\infty}\frac{\mu(r)}{J_{s+1}(r)} c_r(n),
\end{align}
where $k$ is a positive integer, $\mu$ is the M{\"o}bius function and $\zeta$ is the Riemann zeta function. Similar to this, R. Sivaramakrishnan suggested  the identity
\begin{align}\label{Jordan_sum}
\frac{J_k(n)}{n^k}\zeta(s+k) = \sum\limits_{\substack{r=1}}^{\infty}\frac{\mu(r)}{J_{s+k}(r)} c^s(n,r),
\end{align} where $s$ and $k$  are positive integers
as an exercise problem  \cite[Exercise 7]{sivaramakrishnan1988classical}. Analogous to these two identities, we derive in this paper the following identity involving the Cohen-Ramanujan sum \eqref{gen-ram-sum}.
\begin{theo}\label{Coh-sum-J_k}
For positive integers $s,k$ and $n$, we have
 \begin{align*}
 \frac{J_k(n)}{n^k}\zeta(s+k) = \sum\limits_{q=1}^{\infty} \frac{\mu(q)}{J_{s+k}(q)}c_q^s(n^s).
 \end{align*}
\end{theo}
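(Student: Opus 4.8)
The plan is to reduce everything to a closed form for the Cohen-Ramanujan sum and then to evaluate a resulting Euler product. First I would record the evaluation
\[
c_r^s(m) = \sum_{\substack{d \mid r \\ d^s \mid m}} d^s\,\mu(r/d),
\]
which for $s=1$ is the classical $c_r(m)=\sum_{d\mid(m,r)}d\,\mu(r/d)$; it is obtained by writing the indicator of $(h,r^s)_s=1$ as $\sum_{d\mid r,\ d^s\mid h}\mu(d)$, substituting it into \eqref{gen-ram-sum}, and summing the inner geometric progression. Specializing $m=n^s$ and using the elementary equivalence $d^s\mid n^s \iff d\mid n$, this collapses to
\[
c_q^s(n^s)=\sum_{d\mid (n,q)} d^s\,\mu(q/d).
\]

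Before rearranging the series I would secure absolute convergence so that interchanging summations is legitimate. Since $J_{s+k}(q)\ge q^{s+k}/\zeta(s+k)$ and $|c_q^s(n^s)|\le \sum_{d\mid n} d^s=\sigma_s(n)$, the general term is $O_n\bigl(q^{-(s+k)}\bigr)$, and $s+k\ge 2$ guarantees summability; hence every rearrangement below is justified.

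Next I would substitute the closed form into the right-hand side and interchange the order of summation, writing each $q$ as $q=dm$ with $d\mid n$ and $m\ge 1$:
\[
\sum_{q=1}^{\infty}\frac{\mu(q)}{J_{s+k}(q)}\sum_{d\mid(n,q)}d^s\mu(q/d) =\sum_{d\mid n} d^s\sum_{m=1}^{\infty}\frac{\mu(dm)\,\mu(m)}{J_{s+k}(dm)}.
\]
The factor $\mu(dm)$ forces $d$ and $m$ to be squarefree and coprime, so $\mu(dm)\mu(m)=\mu(d)\mu(m)^2=\mu(d)$, and by multiplicativity of $J_{s+k}$ the inner sum factors as $\frac{\mu(d)}{J_{s+k}(d)}\prod_{p\nmid d}\bigl(1+\tfrac{1}{p^{s+k}-1}\bigr)$, using $J_{s+k}(p)=p^{s+k}-1$.

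Finally I would evaluate the Euler product. The unrestricted product equals $\prod_p \frac{p^{s+k}}{p^{s+k}-1}=\zeta(s+k)$, and deleting the primes dividing the squarefree $d$ multiplies it by $\prod_{p\mid d}\frac{p^{s+k}-1}{p^{s+k}}=\frac{J_{s+k}(d)}{d^{s+k}}$. The two $J_{s+k}(d)$ factors then cancel, leaving the $d$-term equal to $\zeta(s+k)\,\mu(d)/d^{k}$, so the whole sum collapses to $\zeta(s+k)\sum_{d\mid n}\mu(d)/d^{k}=\zeta(s+k)\,J_k(n)/n^k$ by the product formula \eqref{eq:j_k_prod_formula}. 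The main obstacle I anticipate is purely bookkeeping: keeping the coprimality constraint $(d,m)=1$ that $\mu(dm)$ imposes correctly synchronized with the Euler factors when passing between the restricted sum and the product.
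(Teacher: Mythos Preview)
Your argument is correct. The route differs from the paper's, though both ultimately rest on the same Euler-product identity $\prod_p(1-p^{-(s+k)})^{-1}=\zeta(s+k)$. The paper exploits multiplicativity in $q$ of $\mu(q)c_q^s(n^s)/J_{s+k}(q)$ at the outset, writing the whole series as $\prod_p\bigl(1+\mu(p)c_p^s(n^s)/J_{s+k}(p)\bigr)$ and then evaluating each local factor directly from the prime-power values $c_p^s(n^s)=p^s-1$ or $-1$ according as $p\mid n$ or $p\nmid n$. You instead begin with the Kluyver-type divisor formula $c_q^s(n^s)=\sum_{d\mid(n,q)}d^s\mu(q/d)$, swap the order of summation to isolate a sum over $d\mid n$, and only then invoke an Euler product on the inner $m$-sum. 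What your approach buys is an explicit absolute-convergence check (which the paper handles only implicitly) and a derivation that does not need the explicit prime-power values of $c_p^s$; what the paper's approach buys is brevity, since the single Euler product over primes replaces your two-step rearrangement and the coprimality bookkeeping you flagged as the main obstacle never arises.
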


H. Gadiyar, M. Ram Murthy and R. Padma  \cite{gopalakrishna2014ramanujan} derived an asymptotic formula for the sum
$\sum\limits_{n\leq N} f(n)g(n+h),$
where $f$ and $g$ are two arithmetical functions possessing absolutely convergent Ramanujan expansions.
Inspired by this, for arithmetical functions $f$ and $g$ with absolutely convergent Cohen-Ramanujan expansions, an asymptotic formula for $\sum\limits_{\substack{n\leq N}}f(n)g(n+h)$  was provided by these authors in  \cite[Theorem 1.2]{chandran2025asymptotic}. Note that, $f(x)$ is said to be \emph{asymptotic} to $g(x)$ (written symbolically as $f(x)\sim g(x)$) if $\lim\limits_{x\rightarrow \infty}\frac{f(x)}{g(x)}=1$. We use the asymptotic formula \cite[Theorem 1.2]{chandran2025asymptotic} to  derive the following asymptotic identity involving Jordan totient function and it is similar the  identity \cite[Corollary 2]{murty2015error} given by M. Ram Murty and B. Saha. Recall that an integer $k$ is said to be \emph{$s-$power free} if there does not exist any prime $p$ such that $p^s$ divides $k$.

\begin{theo}\label{Asym-J_k}
If $s>1$, $a,b > 1+\frac{s}{2}$ and $h= m^sk$, where $k$ is an $s-$ power free integer, then
\begin{align*}
\sum\limits_{n \leq N} \frac{J_a(n)}{n^a} \frac{J_b(n+h)}{(n+h)^b} \sim N &\prod\limits_{p \mid m}\Big(\big(1-\frac{1}{p^{s+a}}\big)\big(1-\frac{1}{p^{s+b}}\big)+\frac{p^{s-1}}{p^{a+b+2s}}\Big)\\&\times\prod\limits_{p \nmid m}\Big(\big(1-\frac{1}{p^{s+a}}\big)\big(1-\frac{1}{p^{s+b}}\big)-\frac{1}{p^{a+b+2s}}\Big).
\end{align*}
\end{theo}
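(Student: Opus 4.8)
The plan is to combine the Cohen-Ramanujan expansion of Theorem~\ref{Coh-sum-J_k} with the shifted-convolution asymptotic formula \cite[Theorem 1.2]{chandran2025asymptotic}. Applying Theorem~\ref{Coh-sum-J_k} once with $k=a$ and once with $k=b$ exhibits the two factors as Cohen-Ramanujan expansions
\begin{align*}
\frac{J_a(n)}{n^a}=\sum_{q=1}^{\infty}\widehat{f}(q)\,c_q^s(n^s),\qquad \frac{J_b(n)}{n^b}=\sum_{r=1}^{\infty}\widehat{g}(r)\,c_r^s(n^s),
\end{align*}
with coefficients $\widehat{f}(q)=\dfrac{\mu(q)}{\zeta(s+a)\,J_{s+a}(q)}$ and $\widehat{g}(r)=\dfrac{\mu(r)}{\zeta(s+b)\,J_{s+b}(r)}$, both supported on squarefree arguments. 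First I would record these coefficients.

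Second, I would verify that these expansions meet the hypotheses of \cite[Theorem 1.2]{chandran2025asymptotic}. The product formula \eqref{eq:j_k_prod_formula} gives $J_{s+a}(q)\gg q^{s+a}$, while the elementary divisor bound $|c_q^s(n^s)|\le\sigma_s(q)\ll_\varepsilon q^{s+\varepsilon}$ yields $|\widehat{f}(q)\,c_q^s(n^s)|\ll_\varepsilon q^{-a+\varepsilon}$, and likewise for $g$. The assumptions $s>1$ and $a,b>1+\tfrac{s}{2}$ are exactly what is needed to push these bounds past the threshold demanded by the theorem, so I may apply it. Doing so expresses the left-hand side as $N$ times an explicit constant built from the two sets of coefficients; for our $f$ and $g$ this constant takes the shape
\begin{align*}
\frac{1}{\zeta(s+a)\,\zeta(s+b)}\sum_{q=1}^{\infty}\frac{\mu(q)^2\,w_q}{J_{s+a}(q)\,J_{s+b}(q)},
\end{align*}
where $w_q=w_q(h,s)$ is the multiplicative weight that the theorem attaches to the Cohen-Ramanujan data at shift $h$.

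Third, I would evaluate this series as an Euler product. Since $\mu(q)^2$, $J_{s+a}$, $J_{s+b}$ and $q\mapsto w_q$ are all multiplicative, and since $\zeta(s+a)^{-1}\zeta(s+b)^{-1}=\prod_p\big(1-p^{-(s+a)}\big)\big(1-p^{-(s+b)}\big)$, the constant factors as $\prod_p L_p$, with a local factor $L_p$ depending only on the prime $p$, on $s,a,b$, and on $w_p$. The decisive structural input is that $w_p$ depends only on whether $p^s\mid h$. Here I would use the hypothesis $h=m^sk$ with $k$ being $s$-power free: this forces the clean dichotomy $p^s\mid h\iff p\mid m$, since $p\mid m$ gives $p^s\mid m^s\mid h$, whereas $p\nmid m$ gives $v_p(h)=v_p(k)\le s-1<s$. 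Splitting the primes accordingly, evaluating $L_p$ for $p\mid m$ and for $p\nmid m$, and simplifying each against the zeta factors should reproduce the two products displayed in the statement; finally one notes that the infinite product converges because each $L_p=1+O\big(p^{-(s+a)}+p^{-(s+b)}\big)$.

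I expect the main obstacle to be the second step rather than the third. Verifying that $\widehat{f}$ and $\widehat{g}$ satisfy the precise decay condition of \cite[Theorem 1.2]{chandran2025asymptotic} is where the strength of the hypotheses is genuinely used: the weaker bound $a,b>1$ suffices only for the convergence of the expansions themselves, and it is the passage to the shifted mean value that forces $a,b>1+\tfrac{s}{2}$ and $s>1$. Identifying the exact local weight $w_p$ and carrying out the interchange of summations implicit in reducing $\sum_{n\le N}\frac{J_a(n)}{n^a}\frac{J_b(n+h)}{(n+h)^b}$ to the above constant is the other delicate point; once $w_p$ is known, the Euler-product bookkeeping of the third step is routine.
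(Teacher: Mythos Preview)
Your plan is correct and matches the paper's proof almost exactly: apply Theorem~\ref{Coh-sum-J_k} to read off $\widehat f,\widehat g$, verify the decay hypothesis of \cite[Theorem 1.2]{chandran2025asymptotic} via $1/J_{s+a}(q)\le \zeta(s+a)/q^{s+a}$ together with $\tau_s(r^s)=\tau(r)\le r$ (this is precisely where $a,b>1+\tfrac{s}{2}$ enters), and then factor the resulting constant as an Euler product split according to $p\mid m$ versus $p\nmid m$. The only place you are vaguer than necessary is your abstract weight $w_q$: in \cite[Theorem 1.2]{chandran2025asymptotic} this is simply $c_q^s(h)$, and once you use $c_q^s(h)=c_q^s(m^s)$ for $h=m^sk$ with $k$ $s$-power free, the local values $c_p^s(m^s)=p^s-1$ or $-1$ from \eqref{eq:cohen-ram-prime-prop} give the two products immediately.
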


Although expansions involving Ramanujan sums have been studied extensively by several authors, the class of Cohen–Ramanujan expansions has received comparatively little attention in the literature. We hope that the Cohen–Ramanujan expansion derived in this paper will stimulate further exploration of similar expansions. Moreover, the asymptotic formula established here can serve as a foundation for future investigations, particularly in examining the accuracy of these estimates for varying values of
$s$, which may open up a rich avenue of related problems.

 \section{Proofs of the Results}
 
\begin{proof}[Proof of Theorem \ref{Coh-sum-J_k}]
Recall that the M{\"o}bius function $\mu$  is defined by the properties
\begin{enumerate}
 \item $\mu(1)=1$
 \item $\mu(p_1^{r_1}\times \ldots \times p_k^{r_k}) =
 \begin{cases}
(-1)^k \text{ if } r_1=\ldots = r_k=1\\
  0 \text{ if any of the } r_i\text{ is greater than 1}
  \end{cases}$
 \end{enumerate} and hence $\mu$ is multiplicative and $\mu(p^r)=0$ for any prime $p$ and integer $r>1$. By \cite[Theorem 1]{cohen1949extension}, $c_r^s(n)$ is multiplicative in $r$.
Hence if $s,k\geq 1$, by the multiplicative properties of these arithmetical functions, we have
\begin{align*}
\sum\limits_{q} \frac{\mu(q)c_q^s(n^s)}{J_{s+k}(q)}&=\prod\limits_{\substack{p\\p \text{ prime}}}\left(1+\frac{\mu(p)c_p^s(n^s)}{J_{s+k}(p)}\right)
\\&= \prod\limits_{\substack{p\mid n \\p \text{ prime}}}\left(1+\frac{\mu(p)c_p^s(n^s)}{J_{s+k}(p)}\right)\prod\limits_{\substack{p\nmid n\\p \text{ prime}}}\left(1+\frac{\mu(p)c_p^s(n^s)}{J_{s+k}(p)}\right).
\end{align*}

By the properties of the Cohen-Ramanujan sum \cite[Equation 1.5]{cohen1950extension}, if $p$ is a prime, we have
\begin{align}
 c^s_{p^r}(n) = \begin{cases}
              p^{sr} - p^{s(r-1)}&\text{ if }p^{sr}|n\\
              - p^{s(r-1)}&\text{ if }p^{s(r-1)}|n,\,p^{sr}\nmid n\\
              0&\text{ if }p^{s(r-1)}\nmid n\\
             \end{cases}\label{eq:cohen-ram-prime-prop}
\end{align}
 and so when $p\nmid n$, we have
 \begin{align*}
\frac{\mu(p)c_p^s(n^s)}{J_{s+k}(p)} = \frac{(-1)(-1)}{p^{s+k}(1-1/p^{s+k})}
 \end{align*} and when $p|n$, we have
 \begin{align*}
\frac{\mu(p)c_p^s(n^s)}{J_{s+k}(p)} = \frac{(-1)(p^s-1)}{p^{s+k}(1-1/p^{s+k})}.
 \end{align*}
Therefore,
 \begin{align*}
\sum\limits_{q} \frac{\mu(q)c_q^s(n^s)}{J_{s+k}(q)}&= \prod\limits_{\substack{p\mid n \\p \text{ prime}}}(1-\frac{p^s-1}{p^{s+k}-1})\prod\limits_{\substack{p\nmid n\\p \text{ prime}}}(1+\frac{1}{p^{s+k}-1}).
\end{align*}

But \begin{align*}
     \prod\limits_{\substack{p\nmid n\\p \text{ prime}}}(1+\frac{1}{p^{s+k}-1})  & = \prod\limits_{\substack{p \text{ prime}}}\frac{p^{s+k}}{p^{s+k}-1} \prod\limits_{\substack{p| n\\p \text{ prime}}} \frac{p^{s+k}-1}{p^{s+k}}\\
     & = \prod\limits_{\substack{p \text{ prime}}}\frac{1}{1-p^{-(s+k)}} \prod\limits_{\substack{p| n\\p \text{ prime}}} \frac{p^{s+k}-1}{p^{s+k}}
    \end{align*}
and
\begin{align*}
 \prod\limits_{\substack{p\mid n \\p \text{ prime}}}(1-\frac{p^s-1}{p^{s+k}-1}) = \prod\limits_{\substack{p\mid n \\p \text{ prime}}}\frac{p^s(p^k-1)}{p^{s+k}-1}.
\end{align*}

So  \begin{align*}
\sum\limits_{q} \frac{\mu(q)c_q^s(n^s)}{J_{s+k}(q)} =  \prod\limits_{\substack{p}}\frac{1}{1-p^{-(s+k)}}\prod\limits_{\substack{p\mid n\\p \text{ prime}}}(1-\frac{1}{p^k}).
\end{align*}
Using the Euler product formula \cite[Theorem 280]{hardy1979introduction}, we have
\begin{align*}
\zeta(z) = \prod\limits_{p}\frac{1}{1-p^{-z}}.
\end{align*}
 Further, $\zeta(z)$ converges for all $z$ with $Re(z) \geq 1$. Hence
 \begin{align*}
  \sum\limits_{q} \frac{\mu(q)c_q^s(n^s)}{J_{s+k}(q)} = \zeta(s+k) \frac{J_k(n)}{n^k}
 \end{align*}
 as we claimed in the statement of the theorem.
\end{proof}
For a positive integer $n$, recall that the number of divisors of $n$ is denoted by $\tau(n)$.
\begin{defi}
 For $k,n\in \N$, by $\tau_{k}(n)$, we mean the number of positive integers $l^k$ dividing $n$  where $l\in \N$.
 \end{defi}
 Note that $\tau(n)\leq n$ and $\tau_s(n^s) = \tau(n)$.
 The following asymptotic result is from \cite{chandran2025asymptotic}.
\begin{theo}\cite[Theorem 1.2]{chandran2025asymptotic}\label{con-sum}
Suppose that $f$ and $g$ are two arithmetical functions with absolutely convergent Cohen-Ramanujan expansions
\begin{center}

$f(n) = \sum\limits_{\substack{r}}\widehat{f}(r)c_r^{s}(n)$ and $g(n) = \sum\limits_{\substack{k}}\widehat{g}(k)c_k^{s}(n)$
\end{center}
respectively. Suppose that $\sum\limits_{\substack{r,k}}\vert \widehat{f}(r)\vert \vert\widehat{g}(k) \vert (r^sk^s)^{\frac{1}{2}} \tau_s(r^s) \tau_s(k^s)< \infty$. Then as $N$ tends to infinity, $\sum\limits_{\substack{n \leq N}} f(n)g(n+h)\sim N \sum\limits_{\substack{r}} \widehat{f}(r)\widehat{g}(r) c_r^s(h)$.

\end{theo}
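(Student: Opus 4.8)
The plan is to follow the blueprint of Gadiyar--Murty--Padma \cite{gopalakrishna2014ramanujan} used in the classical Ramanujan case, adapted to the Cohen--Ramanujan sums. First I would substitute both expansions into the target sum and interchange the order of summation to obtain
\begin{align*}
\sum_{n\le N}f(n)g(n+h)=\sum_{r,k}\widehat f(r)\widehat g(k)\sum_{n\le N}c_r^s(n)c_k^s(n+h).
\end{align*}
For each fixed $N$ the interchange is legitimate, since the $n$-summation is finite and the expansions of $f$ and $g$ are absolutely convergent, so $\sum_{n\le N}\big(\sum_r|\widehat f(r)c_r^s(n)|\big)\big(\sum_k|\widehat g(k)c_k^s(n+h)|\big)<\infty$. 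Everything then reduces to understanding the inner sum $S(r,k,N):=\sum_{n\le N}c_r^s(n)c_k^s(n+h)$.

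The second step is to extract the main term through an orthogonality relation. Since $c_r^s(n)c_k^s(n+h)$ is periodic in $n$ with period $L=\mathrm{lcm}(r^s,k^s)$, I would compute its mean over one period by expanding both factors via the defining exponential sums \eqref{gen-ram-sum}. The average of $e^{2\pi i n(a/r^s+b/k^s)}$ over a full period equals $1$ exactly when $a/r^s+b/k^s\in\Z$ and $0$ otherwise; the key arithmetic lemma is that, under the constraints $(a,r^s)_s=1$ and $(b,k^s)_s=1$, this integrality forces $r=k$. Indeed, the condition gives $r^s\mid ak^s$, and for a prime $p\mid r$ the fact that $p^s\nmid a$ yields $s\,v_p(k)\ge s\,v_p(r)-(s-1)$, whence $v_p(k)\ge v_p(r)$; thus $r\mid k$, and the symmetric argument gives $k\mid r$. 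Hence the period-average vanishes off the diagonal, while for $r=k$ the surviving terms are those with $b\equiv-a\pmod{r^s}$, and summing $e^{-2\pi i a h/r^s}$ over the (symmetric) residue set returns $c_r^s(h)$. This establishes
\begin{align*}
\frac1L\sum_{n=1}^{L}c_r^s(n)c_k^s(n+h)=\delta_{r,k}\,c_r^s(h),
\end{align*}
so the full-period contribution to $S(r,k,N)$ is $N\,\delta_{r,k}c_r^s(h)$ up to an incomplete-period remainder.

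The third and decisive step is to control that remainder uniformly in $N$. Writing $c_r^s(n)=\sum_{d\mid r,\,d^s\mid n}d^s\mu(r/d)$ (the divisor representation consistent with \eqref{eq:cohen-ram-prime-prop}), I would express $S(r,k,N)$ as a double divisor sum whose inner counting function $\#\{n\le N:d^s\mid n,\ e^s\mid n+h\}$ equals $N/\mathrm{lcm}(d^s,e^s)$ when $(d,e)^s\mid h$ and carries an error of size $O(1)$; the main terms reproduce $N\,\delta_{r,k}c_r^s(h)$ by the orthogonality above. The goal is the uniform estimate
\begin{align*}
S(r,k,N)=N\,\delta_{r,k}c_r^s(h)+O\!\left((r^sk^s)^{1/2}\tau_s(r^s)\tau_s(k^s)\right),
\end{align*}
with implied constant independent of $N$ and $h$. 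This is exactly where the argument is delicate: the most naive bound on the incomplete-period sum only gives $O\big(\sum_{d\mid r}\sum_{e\mid k}d^se^s\big)$, which is of size $r^sk^s$ and far too large, so genuine cancellation among the M\"obius-weighted incomplete residue counts must be exploited, in the spirit of the error analysis of Murty and Saha \cite{murty2015error}. I expect this uniform square-root-type bound to be the main obstacle.

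Finally I would assemble the pieces. Inserting the displayed estimate into the double series gives
\begin{align*}
\sum_{n\le N}f(n)g(n+h)=N\sum_{r}\widehat f(r)\widehat g(r)c_r^s(h)+O\!\left(\sum_{r,k}|\widehat f(r)|\,|\widehat g(k)|\,(r^sk^s)^{1/2}\tau_s(r^s)\tau_s(k^s)\right).
\end{align*}
By hypothesis the error term is $O(1)$, hence $o(N)$, and the same hypothesis (together with $|c_r^s(h)|\le J_s(r)\le r^s$) guarantees that the main sum $\sum_r\widehat f(r)\widehat g(r)c_r^s(h)$ converges. Dividing by $N$ and letting $N\to\infty$ then yields the claimed asymptotic $\sum_{n\le N}f(n)g(n+h)\sim N\sum_r\widehat f(r)\widehat g(r)c_r^s(h)$.
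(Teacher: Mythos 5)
The paper you are working from does not actually prove this statement: it is quoted verbatim from \cite[Theorem 1.2]{chandran2025asymptotic}, so the only meaningful comparison is with the Gadiyar--Murty--Padma style argument \cite{gopalakrishna2014ramanujan} that the cited source adapts. Measured against that, your steps 1, 2 and 4 are sound: the interchange of summation is justified exactly as you say, your valuation argument that $a/r^s+b/k^s\in\Z$ together with $(a,r^s)_s=(b,k^s)_s=1$ forces $r=k$ is correct, and the final assembly would be fine if your step 3 held. The genuine gap is step 3, and it is not merely unproven (as you yourself concede); the estimate you propose to prove is false. You ask for
\begin{align*}
S(r,k,N)=N\,\delta_{r,k}\,c_r^s(h)+O\!\left((r^sk^s)^{1/2}\tau_s(r^s)\tau_s(k^s)\right)
\end{align*}
with implied constant independent of $N$. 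Take $s=1$, $h=1$, and $r\ne k$ two large primes. The function $n\mapsto c_r(n)c_k(n+1)$ has period $rk$ and mean zero, but at the unique residue class $n_0\pmod{rk}$ with $r\mid n_0$ and $k\mid n_0+1$ it has a spike of height $(r-1)(k-1)$. Hence $S(r,k,n_0)-S(r,k,n_0-1)=(r-1)(k-1)$, so one of $|S(r,k,n_0)|$, $|S(r,k,n_0-1)|$ is at least $(r-1)(k-1)/2$, which is of order $rk$, while your claimed bound is $(rk)^{1/2}\tau(r)\tau(k)=4(rk)^{1/2}$. The same happens for general $s$ with spikes of height $(r^s-1)(k^s-1)$. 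No cancellation argument can repair this: partial sums over incomplete periods genuinely reach size $r^sk^s$, so an $N$-uniform additive error of square-root size does not exist, and a proof organized around such a lemma cannot be completed.

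What the weight $(r^sk^s)^{1/2}\tau_s(r^s)\tau_s(k^s)$ in the hypothesis actually reflects is a bound that is multiplicative in $N$, not additive, and the limit is then taken by dominated convergence rather than by term-by-term error estimates. Concretely one needs two facts: (i) for each \emph{fixed} pair $(r,k)$, periodicity alone gives $S(r,k,N)=N\delta_{r,k}c_r^s(h)+O_{r,k}(1)$, where the constant may depend on $r,k$ (harmlessly, since the pair is fixed); and (ii) uniformly for all $N\ge h$,
\begin{align*}
|S(r,k,N)|\le 3N\,(r^sk^s)^{1/2}\tau_s(r^s)\tau_s(k^s),
\end{align*}
which follows from your own divisor representation $|c_r^s(n)|\le\sum_{d\mid r,\,d^s\mid n}d^s$, the count $\#\{n\le N: d^s\mid n,\ e^s\mid n+h\}\le N/[d^s,e^s]+1$ (and this count vanishes unless $d^s\le N$ and $e^s\le N+h\le 2N$), together with the gcd inequality $d^se^s/[d^s,e^s]=(d^s,e^s)\le(d^se^s)^{1/2}$; the square root in the theorem comes from this gcd bound, not from cancellation in incomplete exponential sums. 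Given (i) and (ii), split the double series over $(r,k)$ into a finite head and a tail: on the head, (i) gives $\frac1N S(r,k,N)\to\delta_{r,k}c_r^s(h)$; the tail is uniformly small in $N$ by (ii) and the summability hypothesis (Tannery's theorem, i.e.\ dominated convergence for series). Your step 4 then goes through verbatim. So your outline is salvageable, but only after discarding the false key estimate of step 3 in favour of the domination (ii) and restructuring the limiting argument accordingly.
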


Now we use this asymptotic identity along with the identity in Theorem \ref{Coh-sum-J_k} to prove  Theorem \ref{Asym-J_k}.
\begin{proof}[Proof of Theorem \ref{Asym-J_k}]
If we let $f(n) = \frac{J_a(n)}{n^a}$ and $g(n) = \frac{J_b(n)}{n^b}$, then by Theorem  \ref{Coh-sum-J_k}, we have
$\widehat{f}(r) = \frac{\mu(r)}{J_{s+a}(r)\zeta(s+a)}$ and $\widehat{g}(r) = \frac{\mu(r)}{J_{s+b}(r)\zeta(s+b)}$. Now
\begin{align*}
 \frac{1}{J_s(r)} = \frac{1}{r^s \prod\limits_{\substack{p\mid n}}(1-p^{-s})} \leq \frac{1}{r^s \prod\limits_{\substack{p}}(1-p^{-s})} = \frac{\zeta(s)}{r^s}.
\end{align*}
Hence
\begin{align*}
\sum\limits_{\substack{r,t}}\vert \widehat{f}(r)\vert \vert\widehat{g}(t)& \vert (r^st^s)^{\frac{1}{2}} \tau_s(r^s) \tau_s(t^s)\\ & \leq \sum\limits_{\substack{r,t}} \frac{1}{J_{s+a}(r)\zeta(s+a)} \frac{1}{J_{s+b}(t)\zeta(s+b)}(r^st^s)^{\frac{1}{2}}r^st^s \\& \leq  \sum\limits_{\substack{r,t}}\frac{\zeta(s+a)}{r^{s+a}\zeta(s+a)} \frac{\zeta(s+b)}{t^{s+b}\zeta(s+b)}(r^st^s)^{\frac{1}{2}}r^st^s
\\&  = \sum\limits_{\substack{r,t}} \frac{1}{r^{a-\frac{s}{2}}t^{b-\frac{s}{2}}}.
\end{align*}
Since $a,b > 1+\frac{s}{2}$, the above sum converges. Therefore, by Theorem \ref{Coh-sum-J_k} and Theorem \ref{con-sum}, we may write
\begin{align*}
\sum\limits_{n\leq N} \frac{J_a(n)}{n^a} \frac{J_b(n+h)}{(n+h)^b} \sim N \sum\limits_{r} \frac{\mu(r)}{J_{s+a}(r)\zeta(s+a)}\frac{\mu(r)}{J_{s+b}(r)\zeta(s+b)}c_r^s(h).
\end{align*}
From the definition of the Cohen-Ramanujan sum, we can see that if $h=m^s k$ where  $k$ is $s$-power free, then $c_r^s(h) = c_r^s(m^s)$. Using the multiplicative properties of $J_s, \mu$ and $c_r^s$ (on $r$), we may rewrite the above as
\begin{align*}
\sum\limits_{n\leq N} \frac{J_a(n)}{n^a} &\frac{J_b(n+h)}{(n+h)^b}
\\&\sim N  \prod\limits_{p}(1-\frac{1}{p^{s+a}})(1-\frac{1}{p^{s+b}} )\Big( 1+\dfrac{c_{p}^s(m^s)}{J_{s+a}(p)J_{s+b}(p)}\Big)
\\&=  N \prod\limits_{p}\Big((1-\frac{1}{p^{s+a}})(1-\frac{1}{p^{s+b}})\big(1+\frac{c_{p}^s(m^s)}{(p^{s+a}-1)(p^{s+b}-1)}\big) \Big)
\\&= N \prod\limits_{p}\Big((1-\frac{1}{p^{s+a}})(1-\frac{1}{p^{s+b}})+\frac{c_{p}^s(m^s)}{p^{a+b+2s}} \Big).
\end{align*}
Using the properties \eqref{eq:cohen-ram-prime-prop} of Cohen-Ramanujan sums, we thus conclude that
\begin{align*}
\sum\limits_{n\leq N} \frac{J_a(n)}{n^a} \frac{J_b(n+h)}{(n+h)^b} \sim N &\prod\limits_{p\mid m}\Big((1-\frac{1}{p^{s+a}})(1-\frac{1}{p^{s+b}})+\frac{p^s-1}{p^{a+b+2s}}\Big)\\
&\times \prod\limits_{p\nmid m}\Big((1-\frac{1}{p^{s+a}})(1-\frac{1}{p^{s+b}})-\frac{1}{p^{a+b+2s}} \Big).
\end{align*}
\end{proof}  

\section*{Acknowledgements}
The first author thanks the University Grants Commission of India for providing financial support for carrying out this research work through their Senior Research Fellowship (SRF) scheme.

%\bibliographystyle{plain}
%\bibliography{references}

\end{document}